\newtheorem{theorem}{Theorem}[section]
\newtheorem{lemma}[theorem]{Lemma}
\newtheorem{defi}[theorem]{Definition}
\newcommand{\e}{\vec{e}}
\newcommand{\mM}{\mathcal{M}}
\newcommand{\sg}{\sigma}
\newcommand{\R}{\mathbb R}
\newcommand{\To}{\mathbf T}
\newcommand{\Ro}{\mathbf R}
\def\H{\mathbb{H}}
\newcommand{\mC}{\mathscr{C}}
\newcommand{\mc}{\mathbf c}
\title{An Inversion Formula for Horizontal Conical Radon Transform}
\author{Duy N. Nguyen\footnote{High School for the Gifted, Ho Chi Minh City, Vietnam. Email: nnduy@ptnk.edu.vn.} ~ and Linh V. Nguyen\footnote{University of Idaho, 875 Perimeter Dr, Moscow, ID 83844, USA. Email: lnguyen@uidaho.edu.}}
\date{}
\begin{document}

\maketitle

\begin{abstract}
In this paper, we consider the conical Radon transform on all cones with horizontal central axis whose vertices are on a straight line. We derive an explicit inversion formula for such transform. The inversion makes use of the vertical slice transform on a sphere and V-line transform on a plane. 
\end{abstract} 

\section{Introduction}

Let us denote by $\mC$ the set all cones in $\R^n$. Then, a (weighted) conical Radon transform of a function $f \in C^\infty(\R^n)$ is the function $\To(f): \mM \subset \mC \to \R$ defined by 
$$\To(f)(\mc) = \int_\mc f(x) \, w(x,\mc)\,  d\sg(x), \quad c \in \mM,$$
where $w(x,\mc)$ is a positive smooth weight function. The conical Radon transform has been actively studied the thanks to its applications in Compton camera imaging (see, \cite{everett1977gamma,singh1983electronically}). In Compton camera imaging, one has to invert a conical Radon transform in order to find the interior image of a biological object from the measurement of Compton scattering.

In the two dimensional space ($n=2$), the conical Radon transform becomes the V-line transform, which also arises in optical tomography \cite{florescu2009single}. There exist quite a few inversion formulas for the V-line transform (e.g. \cite{basko1997analytical,morvidone2010v,truong2011new,florescu2011inversion,ambartsoumian2012inversion,hristova2015inversion}).  In the three dimensional space ($n=3$), $\mC$ is a six dimensional manifold and there are many practical choices of $\mM$. Taking advantage of redundancy, i.e. by choosing $dim(\mM) > 3$, was the topic of several works (see, e.g., \cite{terzioglu2015some,kuchment2016three}). One, however, may wish to study the case $dim(\mM) = 3$ for the mathematical interest and practical setups for Compton camera imaging \cite{cree1994towards,nguyen2005radon,allmaras2010detecting,gouia2014exact,moon2017analytic}. Several papers (e.g.,\cite{gouia2014analytical,haltmeier2014exact,terzioglu2015some}) gave the inversion formula for conical transform in general dimensional space. We mention that using spherical harmonics to compute series solutions is also a popular approach for inversion (see, e.g., \cite{basko1998application,jung2015inversion,schiefeneder2017radon}). 


\begin{wrapfigure}{R}{0.45\textwidth}
\centering
\includegraphics[width=0.45\textwidth]{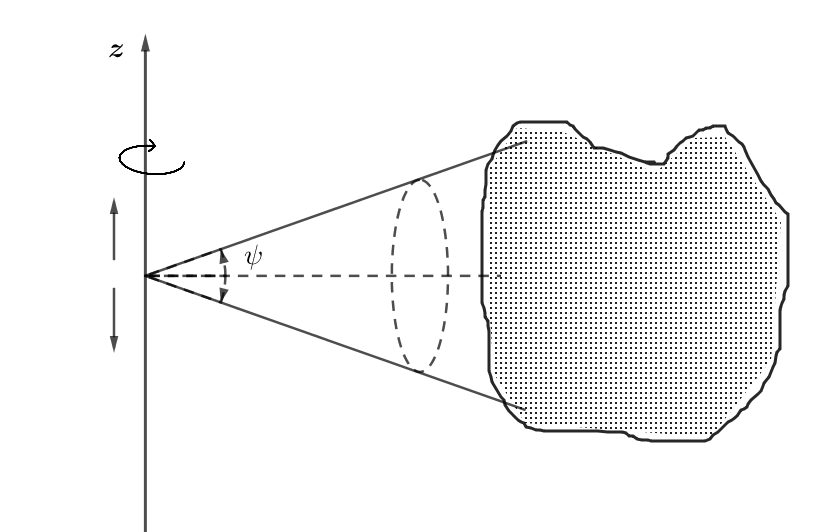}
\caption{\label{F:Ours} Our setups: the conical Radon transform over all cones with horizontal central line and the vertex in z-axis.}
\end{wrapfigure}

In this paper, we aim to reconstruct a function $f \in C^\infty(\R^3)$ from all cones whose vertices are in a vertical line and central lines are horizontal, see Fig.~\ref{F:Ours}. The manifold $\mM$ of such cones is of three dimensions. This formulation corresponds to the Compton camera imaging with detectors on a line. 


Let us now describe the problem in more detail. We introduce the following notations: $b\left(z\right)=\left( 0,0,z \right)\in \mathbb{R}^3$ be a point in the $z$-axis, ${{e}_{\varphi }}:=\left( \cos \varphi ,\sin \varphi ,0 \right)$ is a horizontal unit vector in the $xyz$-space, and $$\mc \left( z,\beta ,\psi  \right):=\left\{ b\left(z \right)+r\omega | r \geq 0 , \omega \in {{\mathbb S}^{2}},{{e}_{\beta }}\cdot\omega =\cos \psi  \right\}$$ is the one-side circular cone, having vertex $b\left( z \right)$ and the symmetry axis  $\left\{b(z) +r \cdot {{e}_{\beta }}|r>0 \right\}$.

We define our (weighted) conical Radon transform of a function $f\in C_{0}^{\infty }\left( \mathbb{R}^3\right)$ as follows.
\begin{eqnarray*}
\To_k(f) &:& \mathbb{R}\times \left[ 0,2\pi  \right)\times \left( 0,\frac{\pi }{2} \right) \longrightarrow \mathbb{R},\\
& & \left( z,\beta ,\psi  \right) \longmapsto \int\limits_{\mc \left( z,\beta ,\psi  \right)}{f\left( x \right){{\left\| x-b\left( z \right) \right\|}^{k-1}}dS\left( x \right)},
\end{eqnarray*} where $k \in \mathbb{N} $ is fixed. In this paper, we investigate the inversion of $\To_k$.

\section{The main results}

In order to invert the conical introduced in the previous section, we introduce the weighted X-ray and vertical slice transforms.

\begin{defi}[The weighted X-Ray transform] \label{D:con} Let $k\in \mathbb{N}$ and $f\in C_{0}^{\infty }\left( {\R^3} \right)$. We define the weighted X-Ray transform 
\begin{eqnarray*} {{\chi }_{k}}f &:& \mathbb{R}\times ({{\mathbb{R}}^{3}}\backslash \left\{ 0 \right\}) \longrightarrow \mathbb{R}, \\ \left( z,\omega  \right) &\longmapsto& \int\limits_{0}^{\infty }{f\left( b\left(z \right)+r\omega  \right){{r}^{k}}dr}.
\end{eqnarray*}
\end{defi} 
The weighted X-ray transform has been used in inverting the conical transform in other setups \cite{moon2017analytic}.

\begin{defi}[The vertical slice transform] Let $g\in C_{0}^{\infty }\left( \mathbb{S}^2 \right)$. We define the transform $\Gamma g:\left[ 0,2\pi  \right)\times \mathbb{R} \longrightarrow \mathbb{R}$ by the formula
\begin{eqnarray*} 
\Gamma g\left(\varphi ,t \right)= \left\{\begin{array}{l}\frac{1}{2\pi \sqrt{1-{{t}^{2}}}}\int\limits_{{{e}_{\varphi }} \cdot \omega=t}{g\left(\omega \right)\, d \omega},\mbox{ for } -1<t <1,\\[12 pt]
 g\left(\pm e_\varphi\right),\mbox{ for } t = \pm 1,\\[6 pt] 
0, \mbox{ for }|t|>1.
\end{array}
\right.
\end{eqnarray*}
\end{defi}

Vertical slice transform was investigated  Gindikin \cite{gindikin1994spherical}. It has the following inversion formula (see \cite[Theorem 2.1]{gindikin1994spherical}):

\begin{theorem} \label{T:Gin} 
 Let $\omega\in \mathbb{S}^2$ and $g : C(\mathbb{S}^2)\longrightarrow \mathbb{R} $ is even in the third coordinate. Then
  \begin{equation}
g \left(\omega \right) =  \dfrac{-\sqrt{1-\omega_1^2 - \omega_2^2}}{4\pi} \int\limits_{-\infty}^{+\infty} \dfrac{1}{t} \int\limits_0^{2\pi} \dfrac{\partial}{\partial t} \Gamma g \left( \varphi , \omega_1 \cos \varphi + \omega_2 \sin \varphi + t \right) \, d\varphi \, dt. 
  \end{equation}
\end{theorem}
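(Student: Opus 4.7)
The plan is to reduce the inversion of the vertical slice transform to the classical inversion of the two-dimensional Radon transform on the unit disk, via projection onto the equatorial plane.

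First, I would parametrize the upper hemisphere $\mathbb{S}^2_+$ by its orthogonal projection $(\omega_1,\omega_2)$ onto the disk $D=\{x^2+y^2<1\}$. The surface element on $\mathbb{S}^2$ in these coordinates is $d\omega=dx\,dy/\sqrt{1-x^2-y^2}$. The slice $\{e_\varphi\cdot\omega=t\}\cap\mathbb{S}^2$ projects to the chord $\{(x,y)\in D:\,x\cos\varphi+y\sin\varphi=t\}$, and its arclength element pulls back as $\sqrt{1-t^2}/\sqrt{1-t^2-s^2}\,ds$, where $s$ is the arclength coordinate along the chord. Combining this with the evenness of $g$ in the third coordinate (so the integral over the full slice equals twice the integral over the upper half), a direct computation gives
\begin{equation*}
\Gamma g(\varphi,t)=\frac{1}{\pi}\int_{-\infty}^{+\infty}G(t\cos\varphi-s\sin\varphi,\,t\sin\varphi+s\cos\varphi)\,ds,
\end{equation*}
where I define
\begin{equation*}
G(x,y)\coloneqq\frac{g\bigl(x,y,\sqrt{1-x^2-y^2}\bigr)}{\sqrt{1-x^2-y^2}}\quad\text{on }D,\qquad G\equiv0\text{ off }D.
\end{equation*}
Thus $\Gamma g=\frac{1}{\pi}\mathcal{R}G$, where $\mathcal{R}$ is the classical two-dimensional Radon transform.

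Next I would invoke the standard Radon inversion formula of filtered backprojection (Hilbert-transform form):
\begin{equation*}
G(x_0,y_0)=-\frac{1}{2\pi^2}\,\mathrm{p.v.}\int_0^{2\pi}\int_{-\infty}^{+\infty}\frac{1}{s-(x_0\cos\varphi+y_0\sin\varphi)}\,\frac{\partial \mathcal{R}G}{\partial s}(\varphi,s)\,ds\,d\varphi.
\end{equation*}
Substituting $\mathcal{R}G=\pi\,\Gamma g$ turns this into
\begin{equation*}
G(x_0,y_0)=-\frac{1}{4\pi}\,\mathrm{p.v.}\int_0^{2\pi}\int_{-\infty}^{+\infty}\frac{1}{s-(x_0\cos\varphi+y_0\sin\varphi)}\,\frac{\partial \Gamma g}{\partial s}(\varphi,s)\,ds\,d\varphi.
\end{equation*}
A change of variables $t=s-(x_0\cos\varphi+y_0\sin\varphi)$ in the inner integral matches the shifted argument in the statement; multiplying both sides by $\sqrt{1-x_0^2-y_0^2}$ recovers $g(\omega)$ for the upper-hemisphere point $\omega=(x_0,y_0,\sqrt{1-x_0^2-y_0^2})$, and the even symmetry extends the formula to the lower hemisphere.

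The main obstacle is verifying the constants (in particular the factor of $\pi$ hidden in the normalization $1/(2\pi\sqrt{1-t^2})$ of $\Gamma g$) and carefully treating the $1/t$ singularity as a principal value, so that the two-factor swap between Radon inversion and the statement above matches exactly. The remaining concern is handling the boundary case where $x_0^2+y_0^2$ approaches $1$: there the prefactor $\sqrt{1-\omega_1^2-\omega_2^2}$ vanishes and one must check the formula holds by continuity, which follows because $G$ is compactly supported in the open disk whenever $g\in C(\mathbb{S}^2)$ and the Hilbert transform is bounded on appropriate Hölder/Sobolev spaces.
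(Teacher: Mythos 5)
The paper itself offers no proof of this statement: Theorem \ref{T:Gin} is imported from Gindikin--Reeds--Shepp \cite{gindikin1994spherical} with a citation only, so there is no internal argument to compare against. Your route---project the hemisphere orthogonally to the unit disk, check that the slice arclength pulls back as $\sqrt{1-t^2}/\sqrt{1-t^2-s^2}\,ds$, use evenness in the third coordinate to get $\Gamma g(\varphi,t)=\tfrac{1}{\pi}\mathcal{R}G(\varphi,t)$ with $G(x,y)=g\bigl(x,y,\sqrt{1-x^2-y^2}\bigr)/\sqrt{1-x^2-y^2}$, invert the planar Radon transform by filtered backprojection, shift $s\mapsto t=s-(\omega_1\cos\varphi+\omega_2\sin\varphi)$, and multiply back by $\sqrt{1-\omega_1^2-\omega_2^2}$---is exactly the standard reduction behind the cited result, and with the correct constants it lands precisely on the stated formula. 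So the strategy is sound and essentially reconstructs the source the authors rely on.

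Two points need repair. First, your quoted Radon inversion is internally inconsistent by a factor of $2$: with backprojection over the full range $\varphi\in[0,2\pi)$ the filtered-backprojection constant is $\tfrac{1}{4\pi^2}$, and $\tfrac{1}{2\pi^2}$ is correct only when $\varphi$ runs over $[0,\pi)$ (using $\mathcal{R}G(\varphi+\pi,-s)=\mathcal{R}G(\varphi,s)$). Your first display pairs $\tfrac{1}{2\pi^2}$ with $\int_0^{2\pi}$, yet the next display, with $\tfrac{1}{4\pi}$ after substituting $\mathcal{R}G=\pi\,\Gamma g$, is the one that follows from the correct constant and is the one that yields the theorem; fix the quoted formula so the two displays agree. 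Second, the closing claim that $G$ is compactly supported in the open disk whenever $g\in C(\mathbb{S}^2)$ is false: $G$ blows up like $(1-x^2-y^2)^{-1/2}$ at the boundary circle unless $g$ vanishes near the equator. The actual technical content of the theorem is justifying filtered backprojection (differentiation in $s$, the principal value, and the degenerate equatorial case $\omega_1^2+\omega_2^2=1$, where the prefactor vanishes while the inner integral may diverge) for functions with exactly this integrable boundary singularity; waving this away via compact support is the one genuine gap in an otherwise correct argument.
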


The following lemma gives us the relationship between the weighted $X$-ray, the weighted conical Radon, and the vertical slice transforms:
\begin{lemma}\label{L:Gamma}
For every $k\in \mathbb{N}$ and $f\in C_{0}^{\infty }\left(\R^3\right)$, $z \in \mathbb{R}, \beta \in \left[0,2\pi\right), \psi \in \left( 0, \pi \right)$ then
\begin{equation} \label{E:GammaC}
	\Gamma \left( \chi _{k}f \right)\left(z,\beta,\cos \psi \right)=\dfrac{\To_k(f) \left( z,\beta ,\psi  \right)}{2\pi \sin^2 \psi }.
\end{equation}
\end{lemma}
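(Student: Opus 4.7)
The plan is to unwind both sides of \eqref{E:GammaC} as explicit one-dimensional integrals over a common angular parameter and then compare. First, I would pick an orthonormal basis $u,v$ of the plane $e_\beta^{\perp}$ and parametrize the circle of directions $\{\omega\in\sph^{2}:e_\beta\cdot\omega=\cos\psi\}$ by
$$\omega(\theta)=\cos\psi\,e_\beta+\sin\psi\,(\cos\theta\,u+\sin\theta\,v),\qquad \theta\in[0,2\pi).$$
Then the cone $\mc(z,\beta,\psi)$ is swept out by the map $(r,\theta)\mapsto b(z)+r\omega(\theta)$, and $\|x-b(z)\|=r$, which is exactly the kind of parametrization that makes $\chi_k f$ appear after integrating in $r$.

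The main geometric step is to compute the surface element on $\mc$. Using $\partial_r x=\omega(\theta)$ (unit) and $\partial_\theta x=r\omega'(\theta)$, which are orthogonal with $|\omega'(\theta)|=\sin\psi$, I get $dS=r\sin\psi\,dr\,d\theta$. Plugging this together with the weight $r^{k-1}$ into the definition of $\To_k$ and applying Fubini's theorem yields
$$\To_k(f)(z,\beta,\psi)=\sin\psi\int_0^{2\pi}\!\!\int_0^{\infty}f(b(z)+r\omega(\theta))\,r^{k}\,dr\,d\theta=\sin\psi\int_0^{2\pi}\chi_k f(z,\omega(\theta))\,d\theta.$$

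For the other side, I would evaluate $\Gamma(\chi_k f(z,\cdot))(\beta,\cos\psi)$ directly from the definition. The slice $\{\omega\in\sph^{2}:e_\beta\cdot\omega=\cos\psi\}$ is the same circle $\theta\mapsto\omega(\theta)$; identifying the measure $d\omega$ in the slice transform with the disintegration of the spherical measure via the function $\omega\mapsto e_\beta\cdot\omega$ (which reduces to $d\theta$ in this parametrization), the slice integral equals $\int_0^{2\pi}\chi_k f(z,\omega(\theta))\,d\theta$. Since $\sqrt{1-\cos^{2}\psi}=\sin\psi$ on $(0,\pi)$, this gives
$$\Gamma(\chi_k f)(z,\beta,\cos\psi)=\frac{1}{2\pi\sin\psi}\int_0^{2\pi}\chi_k f(z,\omega(\theta))\,d\theta.$$
Substituting the first expression into the second produces $\Gamma(\chi_k f)(z,\beta,\cos\psi)=\To_k(f)(z,\beta,\psi)/(2\pi\sin^{2}\psi)$, as required.

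The one place that requires care—and the only real pitfall—is bookkeeping the two different $\sin\psi$ factors: one arising from the cone's surface element (the circular cross-section of $\mc$ at height $r$ has circumference $2\pi r\sin\psi$) and one from the circumference of the slice circle on $\sph^{2}$. Both contribute a factor of $\sin\psi$ in opposite directions relative to what would give $\sin\psi$ alone, and together they combine to the advertised $\sin^{2}\psi$ in the denominator. Beyond this, the argument is a direct application of Fubini's theorem.
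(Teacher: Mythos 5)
Your argument is correct and is essentially the paper's own proof written out in explicit coordinates: both reduce $\To_k(f)(z,\beta,\psi)$ to $\sin\psi\int_0^{2\pi}\chi_k f\left(z,\omega(\theta)\right)\,d\theta$ over the slice circle (the paper via the identity with the delta function $\delta(e_\beta\cdot\omega-\cos\psi)$ on $\sph^2$, you via the cone parametrization, the surface element $dS=r\sin\psi\,dr\,d\theta$, and Fubini) and then match this with the normalization in the definition of $\Gamma$. In particular, you adopt the same reading of the measure $d\omega$ on the slice circle — the disintegration (angular) measure $d\theta$ rather than arc length — that the paper's delta-function computation implicitly uses, so your bookkeeping of the $\sin^{2}\psi$ factor agrees with theirs.
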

In the lemma, we have used the notation $\Gamma (\chi_k f)(z, \cdot)$ for the vertical slice transform of $\chi_k f(z,\cdot)$. Let us now prove the lemma.

\begin{proof}
We have
\begin{align*}
 	 \To_k(f) \left( z,\beta ,\psi  \right) &=\int\limits_{\mc \left( \varphi ,z,\beta ,\psi  \right)}{f\left( x \right){{\left\| x-b\left(z \right) \right\|}^{k-1}}\, d\sigma\left( x \right)}   =\sin \psi \int\limits_{{\mathbb{S}^{2}}}{\int\limits_{0}^{\infty }{f\left( b\left(z \right)+r\omega  \right){{r}^{k}}\delta \left( {{e}_{\beta }}\cdot \omega -\cos \psi  \right) \,dr \,d\sigma\left( \omega  \right)}} \\
 	&=\sin \psi \int\limits_{{{\mathbb S}^{2}}}{{{\chi }_{k}}f\left( z,\omega  \right)\delta \left( {{e}_{\beta }}\cdot \omega -\cos \psi  \right)dS\left( \omega  \right)} = 2\pi {{\sin }^{2}}\left( \psi  \right)\Gamma \left( \chi_k f \right) \left( z,\beta ,\cos \psi  \right).
\end{align*}
This finishes the proof.
\end{proof}

	For any $\omega=\left(\omega_1,\omega_2,\omega_3 \right) \in \mathbb{R}^3$, we define the even and odd parts of the function $\chi_k f$ as follows:
\begin{eqnarray*}
(\chi_k f)_e\left(z,\omega\right) &=& \dfrac{\chi_k f\left(z,\omega_1,\omega_2,\omega_3\right)+ \chi_k f\left(z,\omega_1,\omega_2,-\omega_3\right)}{2},\\
(\chi_k f)_o\left(z,\omega\right) &=& \dfrac{\chi_k f\left(z,\omega_1,\omega_2,\omega_3\right)- \chi_k f\left(z,\omega_1,\omega_2,-\omega_3\right)}{2}.
\end{eqnarray*}
Notice that $\chi_k f\left( z,\omega  \right)=(\chi _k f)_e\left( z,\omega  \right)+ (\chi _k f)_o\left( z,\omega  \right)$. Since all the circles appearing in the vertical slice transform are symmetric with respect to the $xy$-plane and $(\chi _k f)_o$ is an odd function in the third coordinate, $\Gamma(\chi_k f_o) \equiv 0$. Therefore, from Lemma~\ref{L:Gamma}, we obtain
\begin{equation}\label{E:even} \Gamma (\chi_k f )_e \left( z,\beta ,\cos \psi  \right)= \Gamma (\chi_k f ) \left( z,\beta ,\cos \psi  \right) = \dfrac{\To_k(f)\left( z,\beta ,\psi  \right)}{2\pi \, {{\sin }^{2}} \psi }.\end{equation}
	 

Let $a\left( \varphi ,\eta  \right):=\left( \cos \varphi \sin \eta, \sin \varphi \sin \eta, \cos \eta  \right)\in {{\mathbb S}^{2}}$ be the unit vector in $\R^3$ determined by the horizontal angle $\varphi$ and vertical angle $\eta$. 
We denote $(\chi _k f)_e \left( z,\varphi , \eta \right) = (\chi _k f)_e \left( z, a\left( \varphi,\eta \right) \right) $, then:
\begin{lemma}\label{B:Gamma}
For $k \in \mathbb{N}$, $f\in C_0^\infty \left(\R^3\right)$, $z \in \mathbb{R}$, $\varphi \in \left[0, 2 \pi \right]$, $\eta \in \left (0, \pi \right)$,
\begin{align}
	(\chi_0  f)_{e} & \left( z,\varphi ,  \eta \right) = \dfrac{1}{8\pi ^2 (k-1)!} \int\limits_{\eta }^{\pi} \dfrac{\sin ^{k-1} \left( \gamma- \eta \right)}{\sin^k \eta} \left| \cos \gamma \right| \nonumber \\ 
	&\times \left\{ \int\limits_0^{2\pi} \left[ \int\limits_0^\pi \dfrac{1}{\cos \psi - \sin \gamma \cos\left( \beta - \varphi \right)  } \partial_z^k \left( \dfrac{\partial}{\partial \psi} \left( \dfrac{\To_k f\left( z, \beta , \psi \right) }{\sin ^2 \psi} \right)  \right) d \psi   \right]  d\beta  \right\} d \gamma. \label{F:Gamma}
\end{align}
\end{lemma}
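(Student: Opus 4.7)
The plan is to combine Gindikin's vertical-slice inversion (applied pointwise in $z$) with a ``de-weighting'' identity that converts the weighted $(\chi_k f)_e$ into the unweighted $(\chi_0 f)_e$. The first step produces the inner bracketed integral of the claim; the second step supplies the outer $\gamma$-integration with its $\sin^{k-1}(\gamma-\eta)/[(k-1)!\sin^k\eta]$ kernel.

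For the Gindikin step, fix $z$ and apply \reftheo{T:Gin} to the even function $\omega\mapsto(\chi_k f)_e(z,\omega)$, whose slice transform equals $\To_k f(z,\beta,\psi)/(2\pi\sin^2\psi)$ by \eqref{E:even}. Substituting $\omega=a(\varphi,\gamma)$ gives $\sqrt{1-\omega_1^2-\omega_2^2}=|\cos\gamma|$ and $\omega_1\cos\beta+\omega_2\sin\beta=\sin\gamma\cos(\beta-\varphi)$. Changing the inner variable $t$ to $\psi$ via $\cos\psi=t+\sin\gamma\cos(\beta-\varphi)$ produces the denominator $\cos\psi-\sin\gamma\cos(\beta-\varphi)$ and the derivative $\partial_\psi(\To_k f/\sin^2\psi)$, so that $(\chi_k f)_e(z,\varphi,\gamma)$ equals $|\cos\gamma|/(8\pi^2)$ times that double integral. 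Since $\partial_z$ commutes with the integral, the same formula with $\partial_z^k\partial_\psi(\To_k f/\sin^2\psi)$ in place of $\partial_\psi(\To_k f/\sin^2\psi)$ computes $\partial_z^k(\chi_k f)_e(z,\varphi,\gamma)$. The lemma therefore reduces to the identity
\begin{equation*}
(\chi_0 f)_e(z,a(\varphi,\eta))\;=\;\frac{1}{(k-1)!\,\sin^k\eta}\int_\eta^\pi \sin^{k-1}(\gamma-\eta)\,\partial_z^k(\chi_k f)_e(z,\varphi,\gamma)\,d\gamma.
\end{equation*}

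To prove this identity, note that $\partial_z^k\chi_k f=\chi_k(\partial_3^k f)$ by differentiation under the integral, so with $g:=\partial_3^k f$ one has $\partial_z^k(\chi_k f)_e=(\chi_k g)_e$. Expanding the even part splits $(\chi_k g)_e(z,a(\varphi,\gamma))$ into averages of $\chi_k g$ along the two rays from $b(z)$ in the directions $a(\varphi,\gamma)$ and $a(\varphi,\pi-\gamma)$, both of which lie in the vertical half-plane $\Pi_\varphi$ through $b(z)$ spanned by $e_\varphi=(\cos\varphi,\sin\varphi,0)$ and the vertical axis. Working in Cartesian coordinates $(\rho,h)$ on $\Pi_\varphi$ with $\rho\ge 0$ the horizontal and $h$ the vertical displacement from $b(z)$, the polar substitution $\rho=r\sin\gamma$, $h=\pm r\cos\gamma$ ($+$ for the first ray and $-$ for the second) has Jacobian $r\,dr\,d\gamma=d\rho\,dh$, and the algebraic identity $r\sin(\gamma-\eta)=\rho\cos\eta\mp h\sin\eta$ turns the weight $r^k\sin^{k-1}(\gamma-\eta)\,dr\,d\gamma$ into $(\rho\cos\eta\mp h\sin\eta)^{k-1}\,d\rho\,dh$ on the wedge $W^\pm$ where that linear form is positive. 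The affine substitution $u=\rho\cot\eta\mp h$ straightens each wedge to $u\in(0,\infty)$, turns the weight into $(u\sin\eta)^{k-1}$, and brings each inner integral to the Riemann--Liouville form $\int_0^\infty u^{k-1}\,\phi^{(k)}(y_0\mp u)\,du$ with $\phi(y):=f(\rho\cos\varphi,\rho\sin\varphi,y)$ and $y_0=z\pm\rho\cot\eta$. Because $f\in C_0^\infty$, $k$ successive integrations by parts collapse this to $(k-1)!\,\phi(y_0)$ (up to a sign). The outer substitution $\rho=r\sin\eta$ then identifies the remaining $\rho$-integral with $\sin\eta\cdot\chi_0 f(z,a(\varphi,\pm\eta))$, and the two contributions combine into $(k-1)!\sin^k\eta\cdot(\chi_0 f)_e(z,a(\varphi,\eta))$.

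The main obstacle will be the geometric bookkeeping in this reduction: correctly identifying the two wedges $W^\pm$ for the two rays, verifying the algebraic identity $r\sin(\gamma-\eta)=\rho\cos\eta\mp h\sin\eta$ on each, and tracking the signs arising in the $k$-fold integrations by parts so that the two-ray contributions symmetrize into $(\chi_0 f)_e$ rather than its odd counterpart.
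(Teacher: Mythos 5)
Your route coincides with the paper's up to the last step: the Gindikin step (Theorem~\ref{T:Gin} applied to $(\chi_k f)_e$ via \eqref{E:even}, with the substitution $t\mapsto\psi$) is exactly the paper's first half, and the identity you reduce to,
\begin{equation*}
(\chi_0 f)_e\bigl(z,a(\varphi,\eta)\bigr)=\frac{1}{(k-1)!\,\sin^k\eta}\int_\eta^\pi \sin^{k-1}(\gamma-\eta)\,\partial_z^k(\chi_k f)_e\bigl(z,a(\varphi,\gamma)\bigr)\,d\gamma,
\end{equation*}
is precisely what the paper gets by inserting even parts into the $\chi_0$--$\partial_z^k\chi_k$ relation quoted from \cite{moon2017analytic} and changing variables $s=\sin\eta\cot\gamma$, $h=\sin\eta/\sin\gamma$. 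The only new element in your plan is a direct proof of this identity, and the sign you defer to ``bookkeeping'' is exactly where it fails. Carrying out your own computation: the upper-ray (first wedge) term collapses to $\int_0^\infty u^{k-1}\phi^{(k)}(y_0-u)\,du=(k-1)!\,\phi(y_0)$, but the lower-ray term is $\int_0^\infty u^{k-1}\phi^{(k)}(y_0+u)\,du=(-1)^k(k-1)!\,\phi(y_0)$, since each of the $k-1$ integrations by parts now produces a minus sign and the final antiderivative is evaluated from the left endpoint. Hence the two rays combine to $\tfrac12\bigl[\chi_0 f(z,a(\varphi,\eta))+(-1)^k\chi_0 f(z,a(\varphi,\pi-\eta))\bigr]$, which is $(\chi_0 f)_e$ only for even $k$; for odd $k$ it is $(\chi_0 f)_o$. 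No bookkeeping can repair this, because the identity itself is false for odd $k$: already for $k=1$, $\partial_z\chi_1 f(z,\omega_1,\omega_2,-s)=-\partial_s\bigl(\int_0^\infty f(r\omega_1,r\omega_2,z-rs)\,dr\bigr)$, so $\int_{-\infty}^{\omega_3}\partial_z(\chi_1 f)_e(z,\omega_1,\omega_2,s)\,ds=\tfrac12\bigl[\chi_0 f(z,\omega_1,\omega_2,\omega_3)-\chi_0 f(z,\omega_1,\omega_2,-\omega_3)\bigr]$, the odd part, not the even one.

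Two remarks for context. First, your wedge argument is not at fault where the paper is sound: the paper's proof inserts even parts into the cited unweighted--weighted relation without justification, and that insertion carries the same hidden $(-1)^k$ on the reflected ray; your explicit computation merely makes the parity obstruction visible, since the Gindikin half is an exact identity, the right-hand side of \eqref{F:Gamma} really equals $\tfrac12\bigl[\chi_0 f(z,a(\varphi,\eta))+(-1)^k\chi_0 f(z,a(\varphi,\pi-\eta))\bigr]$. Second, for even $k$ your plan, written out with the signs tracked as above (and with a sentence justifying moving $\partial_z^k$ through the Gindikin integrals, which you and the paper both use silently), does give a correct and self-contained proof of \eqref{F:Gamma}, arguably cleaner than citing \cite{moon2017analytic}; but as a proof of the lemma as stated, for all $k\in\mathbb{N}$, it has a genuine gap at the symmetrization step, and for odd $k$ the statement itself needs to be corrected (e.g.\ replacing $(\chi_0 f)_e$ by the combination displayed above).
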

\begin{proof}
Since $(\chi _k f)_e$ is even in the third coordinate, applying Lemma~\ref{L:Gamma} and Theorem \ref{T:Gin}, we obtain
\begin{align*}
(\chi _k f)_e \left( z, \omega \right) &=  \dfrac{-\sqrt{1-\omega_1^2 - \omega_2^2}}{4\pi} \int\limits_{-\infty}^{+\infty} \dfrac{1}{q} \int\limits_0^{2\pi} \dfrac{\partial}{\partial q} \Gamma\left( \chi_k f_{e} \right) \left( z, \beta , \omega_1 \cos \beta + \omega_2 \sin \beta + q \right) d\beta dq. 
\end{align*}
For any $\omega \in {\mathbb S}^2$, we can write $\omega = a(\varphi, \eta) = \left( \sin\eta \cos \varphi, \sin \eta \sin \varphi, \cos \eta \right)$. 	Therefore, 
\begin{align*}
(\chi _k f)_e \left( z, \omega \right)&= \dfrac{-\sqrt{1-\sin^2 \eta}}{4\pi} \int\limits_0^{2\pi} \int\limits_ {-\infty}^{+\infty} \dfrac{1}{q} \dfrac{\partial}{\partial q} \Gamma \left( \chi _k f_{e} \right) \left( z, \beta, \sin \eta \cos \left( \beta - \varphi \right) +q \right) dq d\beta.
	\end{align*}
Since $\Gamma (\chi_kf)_e\left(.,t\right)=0$ when $|t| >1$, we only need to consider when the third variable in the integrand is in the interval $[-1,1]$. We, hence, can define $\cos \psi = \sin \eta  \cos \left( \beta - \varphi \right) +q$ for some $\psi \in [0,\pi]$.  We arrive at
\begin{align*}
(\chi_k f)_{e} \left( z,\varphi, \eta \right) &= \dfrac{\left| \cos \eta \right|}{4\pi} \int\limits_0^{2\pi} \int\limits_0^\pi \dfrac{1}{\cos \psi - \sin \eta \cos \left( \beta - \varphi \right) } \dfrac{\partial}{\partial \psi} \Gamma \left( \chi_k f_{e} \right) \left( z, \beta, \cos \psi \right) d\psi d\beta.\end{align*}
Using (\ref{E:even}), we deduce
\begin{align*}
(\chi_k f)_{e} \left( z,\varphi, \eta \right) &= \dfrac{\left| \cos \eta \right|}{4\pi} \int\limits_0^{2\pi} \int\limits_0^\pi \dfrac{1}{\cos \psi - \sin \eta \cos \left( \beta - \varphi \right) } \dfrac{\partial}{\partial \psi} \left( \dfrac{\To_k(f)\left( z, \beta , \psi \right) }{2\pi \sin^2 \psi} \right) d\psi d\beta.
\end{align*}
On the other hand (see, e.g., \cite{moon2017analytic}), 
\begin{align*}
(\chi_0 f)_{e} \left( z, \omega_1, \omega_2, \omega_3 \right) &= \dfrac{1}{(k-1)!} \int\limits_{-\infty}^{\omega_3} \left( \omega_3 -s \right) ^{k-1} \partial _z^k \left( \chi_k f_{e}\right) \left( z, \omega_1, \omega_2, s \right) ds \\ &= \dfrac{1}{(k-1)!} \int\limits_{-\infty}^{\omega_3} \left( \omega_3-s \right) ^{k-1} \partial_z^k \left( \chi_k f_{e} \right) \left( z, \left( \dfrac{\omega_1, \omega_2, s }{h} \right) \right)  h^{-(k+1)} ds,
\end{align*}
for any $h > 0$.

Let us change the variable $s \to \gamma$ by the formula $s= \cot \gamma  \sin \eta, \, \gamma \in \left(0, \pi \right)$. Then, $ds=\dfrac{-\sin \eta}{\sin^2 \gamma } d \gamma$. Choosing $h=\dfrac{\sin \eta}{\sin \gamma}$, 
	\begin{align*}
	(\chi_0 f)_{e} \left( z, \varphi,\eta \right) = & \dfrac{1}{(k-1)!} \int\limits_{\eta } ^ {\pi } \left( \cos \eta - \sin \eta \cot \gamma \right) ^{k-1}  \partial_z^k \left( \chi_k f_{e} \right) \left( z, \left( \sin \gamma \theta \left( \varphi \right) , \cos \gamma \right) \right) \left( \dfrac{\sin \gamma}{\sin \eta} \right) ^{k+1} \dfrac{\sin \eta}{\sin^2 \gamma} d\gamma  \\ 
	= & \dfrac{1}{(k-1)!} \int\limits_{\eta } ^ {\pi } \dfrac{\sin^{k-1} \left(  \gamma - \eta \right) }{\sin^k \eta} \partial_z^k \left( \chi_k f_{e} \right) \left( z, \left(\sin \gamma \theta \left( \varphi \right) , \cos \gamma \right)\right) d\gamma \\ 
	= & \dfrac{1}{(k-1)!} \int\limits_{\eta } ^ {\pi } \dfrac{\sin^{k-1} \left( \gamma - \eta \right) }{\sin^k \eta} \\ 
	& \times  \partial_z^k \left\{ \dfrac{\left| \cos \gamma \right| }{4\pi} \int\limits_0^{2\pi} \int\limits_0^\pi \dfrac{1}{\cos \psi - \sin \gamma \cos \left( \beta - \varphi \right)}  \dfrac{\partial}{\partial \psi} \left( \dfrac{\To_k(f) \left( z, \beta, \psi \right)}{2\pi \sin^2 \psi} \right) d\psi d\beta \right\} d\gamma  \\ 
	= & \dfrac{1}{8\pi^2 (k-1)!} \int\limits_{\eta}^{\pi } \dfrac{\sin ^{k-1} \left( \gamma - \eta \right) }{\sin ^k \eta}  \left| \cos \gamma \right|  \\
	& \times  \left\{ \int\limits_0^{2\pi} \left[ \int\limits_0^\pi \dfrac{1}{\cos \psi - \sin \gamma \cos \left( \beta - \varphi \right) } \partial_z^k \left( \dfrac{\partial}{\partial \psi} \left( \dfrac{\To_k(f) \left( z, \beta, \psi \right) }{\sin^2 \psi} \right) \right) d\psi \right] d\beta \right\} d\gamma 
	\end{align*}
This finishes our proof. 
\end{proof}

Let us note $2 (X_0 f)_e(z,\varphi,\eta)$ is the integral of $f$ along a V-line whose vertex is $b(z)$, each branch makes an angle $\eta$ to the horizontal plane and is the reflection of the other via the horizontal plane. We are now ready to compute the function $f$ in $\R^3$ from its conical transform in Definition~\ref{D:con}. To this end, we will decompose $\R^3$ into the union of half-planes $\H_{\e}$, where $\e$ is a horizontal unit vector. Here, $\H_{\e}$ is the vertical half plane passing through the $z$-axis and containing the unit vector $\e$. We only need to compute $f$ on each such half-plane. On each such half-plane we are given the V-line transform of the function $f$, which integrates $f$ over all V-lines with horizontal central axis whose vertices are on the boundary of $\H_{\e}$. The inversion such transform can be reduced to that of the X-ray transform (see \cite{basko1997analytical}). The idea will be used in our proof below for Theorem~\ref{T:main} below.

\begin{theorem}[Inversion Formula of Conical Radon Transform] \label{T:main} For each $x \in \R^3$, we write $x= \left(r e_\varphi , x_3 \right) $ where $r \geq 0$ and $\varphi \in \left[0,2\pi \right)$. Then,
\begin{align*}
		f\left(x\right) &=\dfrac{1}{8\pi^4 (k-1)!}\int\limits_{0}^{\pi}\int_{\mathbb{R}}\dfrac{1}{r \cos\eta + x_3\sin\eta - p} \\
		&\times  \left\{ \int\limits_{\eta }^{\pi} \sin ^{k-1} \left( \gamma - \eta \right)|\cos \gamma | \int\limits_0^{2\pi} \left[ \int\limits_0^\pi \dfrac{\partial_p^{k+1}\left[\partial_{\psi} \left( \dfrac{\To_k(f) \left( p/\sin\eta, \beta , \psi \right) }{\sin ^2 \psi} \right)\right]}{\cos \psi - \sin \gamma \cos\left( \beta - \varphi \right)} d \psi   \right]  d\beta  \right\} \, d \gamma \, dp \, d\eta .\\
\label{G:Gamma}
\end{align*}

\end{theorem}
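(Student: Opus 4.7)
My plan is to foliate $\R^3$ into vertical half-planes through the $z$-axis, reduce the conical transform on each half-plane first to a V-line transform there via Lemma~\ref{B:Gamma}, then to a 2D line transform via a Basko--type symmetric extension, and finally apply filtered backprojection in the plane. Fix a horizontal direction $e_\varphi$ and parameterize $\H_{e_\varphi}$ by $(r,x_3)$ with $r \ge 0$. By Lemma~\ref{B:Gamma}, $(\chi_0 f)_e(z,\varphi,\eta)$ is an explicit functional of $\To_k f$, and geometrically $2(\chi_0 f)_e(z,\varphi,\eta)$ is the integral of $f$ along the V-line in $\H_{e_\varphi}$ with vertex $b(z)$, symmetric about the horizontal plane, whose generators make angle $\eta$ with the $z$-axis.

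I next extend $f|_{\H_{e_\varphi}}$ to the full vertical plane containing $e_\varphi$ and the $z$-axis by even reflection, $\widetilde f(r,x_3) := f(|r|e_\varphi, x_3)$. Under this reflection the lower branch of the V-line with vertex $b(z)$ is carried onto the continuation of the upper branch into $r < 0$, so the two branches together with their images assemble into the full straight line
\begin{equation*}
\ell(p,\eta) := \{(r,x_3) \in \R^2 : r\cos\eta + x_3\sin\eta = p\}, \qquad p = z\sin\eta.
\end{equation*}
The evenness of $\widetilde f$ then gives that its 2D line integral $R\widetilde f(p,\eta)$ along $\ell(p,\eta)$ equals $2(\chi_0 f)_e(p/\sin\eta,\varphi,\eta)$. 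Applying the Hilbert-transform form of 2D filtered backprojection,
\begin{equation*}
\widetilde f(r,x_3) = \frac{1}{2\pi^2}\int_0^\pi \mathrm{p.v.}\!\int_{\R}\frac{\partial_p\bigl[R\widetilde f(p,\eta)\bigr]}{r\cos\eta + x_3\sin\eta - p}\,dp\,d\eta,
\end{equation*}
and restricting to $r \ge 0$ recovers $f(re_\varphi, x_3)$ purely in terms of $(\chi_0 f)_e$.

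Finally I substitute the explicit expression from Lemma~\ref{B:Gamma} for $(\chi_0 f)_e$ and use the chain rule $\partial_p\bigl[g(p/\sin\eta)\bigr] = (\sin\eta)^{-1}\partial_z g$ to move the outer $p$-derivative through the change of variable $z = p/\sin\eta$. Lemma~\ref{B:Gamma} contributes a prefactor $(\sin\eta)^{-k}$ together with $\partial_z^k$, and the X-ray inversion step contributes one further $(\sin\eta)^{-1}\partial_z$; together they reassemble as $(\sin\eta)^{-(k+1)}\partial_z^{k+1}\bigl[\,\cdots\,\bigr]_{z=p/\sin\eta} = \partial_p^{k+1}\bigl[\,\cdots\,(p/\sin\eta)\bigr]$, producing exactly the integrand claimed in Theorem~\ref{T:main}. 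The main points to handle with care are (i) tracking the multiplicative constants so that the factor $\tfrac{1}{2\pi^2}\cdot\tfrac{2}{8\pi^2(k-1)!}$ coming from the X-ray inversion and Lemma~\ref{B:Gamma} collapses to the advertised $\tfrac{1}{8\pi^4(k-1)!}$; (ii) justifying the principal-value interpretation of the Hilbert-type integral, given that the even extension $\widetilde f$ is only Lipschitz across the $z$-axis; and (iii) verifying that the apparent double-counting of each V-line under the symmetry $\eta \leftrightarrow \pi - \eta$ is consistent with integrating $\eta$ over $(0,\pi)$ in the 2D backprojection.
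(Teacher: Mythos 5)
Your proposal follows essentially the same route as the paper's own proof: restriction to the vertical half-plane $\H_{e_\varphi}$, even reflection $f^*$ in $r$, the identity $\Ro(f^*)(\eta,p)=2(\chi_0 f)_e(p/\sin\eta,\varphi,\eta)$ turning the V-line data from Lemma~\ref{B:Gamma} into a 2D Radon transform, and then the filtered-backprojection (Hilbert-transform) inversion, with the same bookkeeping of constants $\tfrac{1}{2\pi^2}\cdot 2\cdot\tfrac{1}{8\pi^2(k-1)!}=\tfrac{1}{8\pi^4(k-1)!}$ and the same reassembly of $(\sin\eta)^{-k}\partial_z^k$ plus one $\partial_p$ into $\partial_p^{k+1}$ at $z=p/\sin\eta$. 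The regularity/principal-value and double-counting caveats you flag are not discussed in the paper, but they do not change the argument, which is correct as proposed.
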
	

\begin{proof} Let $\H$ be the vertical half-plane passing through the z-axis and the unit vector $ \vec{e} = e_\varphi$. For any $x \in \H$ then $x= \left(r \theta \left( \varphi \right) , x_3 \right) $.
We define $f^*\left( r ,x_3 \right) = f\left( r \theta \left( \varphi \right) , x_3 \right), \mbox{ for } r \geq 0$. We then extend $f^*$ to the whole space $(r,x_3) \in \R^2$ by the even reflection. Then, for any $\eta \in (0, \pi ), p \in \mathbb{R}$, we have
\begin{eqnarray*}
(\chi_0 f)_e\left( z,\varphi , \eta \right) = \dfrac{1}{2}\int\limits_{-\infty}^{+\infty} f^* \left( r \sin \eta , z-r\cos \eta \right)dr =\dfrac{1}{2} \Ro(f^*)\left(\eta,z\sin\eta\right).
\end{eqnarray*}
Here, $\Ro$ denote the standard Radon transform in two dimensional space. 
Choosing $z= \dfrac{p}{\sin \eta}$, then 
$$ \Ro (f^*) \left(\eta, p \right)= 2(\chi_0 f)_e\left( \dfrac{p}{\sin \eta} , \varphi,\eta \right), \mbox{ for } 0 < \eta < \pi.$$
Since $f \in C_0^\infty \left(\R^3\right)$, the value of $\Ro(f^*)$ at $\eta =0, \pi$ can be computed by the continuous extension. Now, using the inversion of the Radon transform  (see \cite{helgason1999radon}) and Lemma \ref{B:Gamma}, we conclude
\begin{align*}
		f^*\left(r,x_3\right)	&=\dfrac{1}{2\pi^2}\int\limits_{0}^{\pi}\int_{\mathbb{R}}\dfrac{\partial_p \Ro f^*\left( \eta, p \right)}{r\cos\eta + x_3\sin\eta - p}dp d\eta = \dfrac{1}{\pi^2}\int\limits_{0}^{\pi}\int_{\mathbb{R}}\dfrac{\partial_{p} \left( \chi_0 f_e \right) \left( \dfrac{p}{\sin \eta}, \varphi , \eta \right)}{r\cos\eta + x_3\sin\eta-p} dp d\eta \\
		&=\dfrac{1}{8\pi^4 (k-1)!}\int\limits_{0}^{\pi}\int_{\mathbb{R}}\dfrac{1}{ r\cos\eta + x_3\sin\eta - p} \\
		&\times  \left\{ \int\limits_{\eta }^{\pi} \sin ^{k-1} \left( \gamma - \eta \right)|\cos \gamma | \int\limits_0^{2\pi} \left[ \int\limits_0^\pi \dfrac{\partial_p^{k+1}\left[\partial_{\psi} \left( \dfrac{\To_k(f) \left( p/\sin\eta, \beta , \psi \right) }{\sin ^2 \psi} \right)\right]}{\cos \psi - \sin \gamma \cos\left( \beta - \varphi \right)} d \psi   \right]  d\beta  \right\} d \gamma dp d\eta. \\
\end{align*}
This finishes our proof.
\end{proof}	

\section*{Acknowlegement} Linh Nguyen's research is partially supported by the NSF grants, DMS 1212125 and DMS 1616904.


\begin{thebibliography}{10}

\bibitem{allmaras2010detecting}
Moritz Allmaras, David~P Darrow, Yulia Hristova, Guido Kanschat, and Peter
  Kuchment.
\newblock Detecting small low emission radiating sources.
\newblock {\em arXiv preprint arXiv:1012.3373}, 2010.

\bibitem{ambartsoumian2012inversion}
Gaik Ambartsoumian.
\newblock Inversion of the v-line radon transform in a disc and its
  applications in imaging.
\newblock {\em Computers \& Mathematics with Applications}, 64(3):260--265,
  2012.

\bibitem{basko1997analytical}
Roman Basko, Gengsheng~L Zeng, and Grant~T Gullberg.
\newblock Analytical reconstruction formula for one-dimensional compton camera.
\newblock {\em IEEE Transactions on Nuclear Science}, 44(3):1342--1346, 1997.

\bibitem{basko1998application}
Roman Basko, Gengsheng~L Zeng, and Grant~T Gullberg.
\newblock Application of spherical harmonics to image reconstruction for the
  compton camera.
\newblock {\em Physics in Medicine \& Biology}, 43(4):887, 1998.

\bibitem{cree1994towards}
Michael~J Cree and Philip~J Bones.
\newblock Towards direct reconstruction from a gamma camera based on compton
  scattering.
\newblock {\em IEEE transactions on medical imaging}, 13(2):398--407, 1994.

\bibitem{everett1977gamma}
DB~Everett, JS~Fleming, RW~Todd, and JM~Nightingale.
\newblock Gamma-radiation imaging system based on the compton effect.
\newblock In {\em Proceedings of the Institution of Electrical Engineers},
  volume 124, pages 995--1000. IET, 1977.

\bibitem{florescu2011inversion}
Lucia Florescu, Vadim~A Markel, and John~C Schotland.
\newblock Inversion formulas for the broken-ray radon transform.
\newblock {\em Inverse Problems}, 27(2):025002, 2011.

\bibitem{florescu2009single}
Lucia Florescu, John~C Schotland, and Vadim~A Markel.
\newblock Single-scattering optical tomography.
\newblock {\em Physical Review E}, 79(3):036607, 2009.

\bibitem{gindikin1994spherical}
S~Gindikin, J~Reeds, and L~Shepp.
\newblock Spherical tomography and spherical integral geometry.
\newblock {\em Tomography, Impedance Imaging, and Integral Geometry (South
  Hadley, MA, 1993), Lectures in Appl. Math}, 30:83--92, 1994.

\bibitem{gouia2014analytical}
Rim Gouia-Zarrad.
\newblock Analytical reconstruction formula for n-dimensional conical radon
  transform.
\newblock {\em Computers \& Mathematics with Applications}, 68(9):1016--1023,
  2014.

\bibitem{gouia2014exact}
Rim Gouia-Zarrad and Gaik Ambartsoumian.
\newblock Exact inversion of the conical radon transform with a fixed opening
  angle.
\newblock {\em Inverse Problems}, 30(4):045007, 2014.

\bibitem{haltmeier2014exact}
Markus Haltmeier.
\newblock Exact reconstruction formulas for a radon transform over cones.
\newblock {\em Inverse Problems}, 30(3):035001, 2014.

\bibitem{helgason1999radon}
Sigurdur Helgason and S~Helgason.
\newblock {\em The radon transform}, volume~2.
\newblock Springer, 1999.

\bibitem{hristova2015inversion}
Yulia Hristova.
\newblock Inversion of a v-line transform arising in emission tomography.
\newblock {\em Journal of Coupled Systems and Multiscale Dynamics},
  3(3):272--277, 2015.

\bibitem{jung2015inversion}
Chang-Yeol Jung and Sunghwan Moon.
\newblock Inversion formulas for cone transforms arising in application of
  compton cameras.
\newblock {\em Inverse Problems}, 31(1):015006, 2015.

\bibitem{kuchment2016three}
Peter Kuchment and Fatma Terzioglu.
\newblock Three-dimensional image reconstruction from compton camera data.
\newblock {\em SIAM Journal on Imaging Sciences}, 9(4):1708--1725, 2016.

\bibitem{moon2017analytic}
Sunghwan Moon and Markus Haltmeier.
\newblock Analytic inversion of a conical radon transform arising in
  application of compton cameras on the cylinder.
\newblock {\em SIAM Journal on imaging sciences}, 10(2):535--557, 2017.

\bibitem{morvidone2010v}
Marcela Morvidone, Ma{\"\i}~Khuong Nguyen, Tuong~T Truong, and Habib Zaidi.
\newblock On the v-line radon transform and its imaging applications.
\newblock {\em Journal of Biomedical Imaging}, 2010:11, 2010.

\bibitem{nguyen2005radon}
Mai~K Nguyen, Tuong~T Truong, and Pierre Grangeat.
\newblock Radon transforms on a class of cones with fixed axis direction.
\newblock {\em Journal of Physics A: Mathematical and General}, 38(37):8003,
  2005.

\bibitem{schiefeneder2017radon}
Daniela Schiefeneder and Markus Haltmeier.
\newblock The radon transform over cones with vertices on the sphere and
  orthogonal axes.
\newblock {\em SIAM Journal on Applied Mathematics}, 77(4):1335--1351, 2017.

\bibitem{singh1983electronically}
Manbir Singh.
\newblock An electronically collimated gamma camera for single photon emission
  computed tomography. part i: Theoretical considerations and design criteria.
\newblock {\em Medical Physics}, 10(4):421--427, 1983.

\bibitem{terzioglu2015some}
Fatma Terzioglu.
\newblock Some inversion formulas for the cone transform.
\newblock {\em Inverse Problems}, 31(11):115010, 2015.

\bibitem{truong2011new}
Tuong~T Truong and Mai~K Nguyen.
\newblock On new v-line radon transforms in $\mathbb{R}^2$ and their inversion.
\newblock {\em Journal of Physics A: Mathematical and Theoretical},
  44(7):075206, 2011.

\end{thebibliography}
\end{document}